\pgfplotsset{compat=1.18}
\numberwithin{equation}{section}
\theoremstyle{definition}
\newtheorem{thm}{Theorem}
\newtheorem{rem}[thm]{Remark}
\newtheorem{defi}[thm]{Definition}
\newtheorem{cor}[thm]{Corollary}
\newtheorem*{thm*}{Theorem}
\newtheorem*{rem*}{Remark}
\newtheorem*{folg*}{Folgerung}
\newtheorem*{examples*}{Beispiele}
\newtheorem*{ex*}{Beispiel}
\newtheorem*{lem*}{Lemma}
\newtheorem*{prop*}{Proposition}
\newtheorem*{defi*}{Definition}
\newtheorem*{exercise*}{Übung}
\newtheorem*{conj*}{Conjecture}
\newtheorem*{q*}{Question}
\newcommand{\be}{\mathbf{e}}
\newcommand{\bm}{\mathbf{m}}
\newcommand{\bs}{\mathbf{s}}
\newcommand{\bt}{\mathbf{t}}
\newcommand{\bx}{\mathbf{x}}
\newcommand{\boldy}{\mathbf{y}}
\newcommand{\bbN}{\mathbb{N}}
\newcommand{\bbR}{\mathbb{R}}
\newcommand{\bbT}{\mathbb{T}}
\newcommand{\bbZ}{\mathbb{Z}}
\newcommand{\cT}{\mathcal{T}}
\newcommand{\fS}{\mathfrak{S}}
\newcommand{\iu}{\mathrm{i}}
\newcommand{\sm}{\setminus}
\newcommand{\sbse}{\subseteq}
\newcommand{\sqf}{\sqrt{5}}
\newcommand{\Lp}{L_2^{\text{per}}}
\newcommand{\eps}{\varepsilon}
\newcommand{\dx}{\text{d}}
\author{Nicolas Nagel\footnote{E-mail: \texttt{nicolas.nagel@mathematik.tu-chemnitz.de} \\ ORCID iD: 0009-0004-3362-3543}}
\title{On the Global Optimality of Fibonacci Lattices in the Torus}
\affil{Chemnitz University of Technology, Department of Mathematics}
\date{}
\pgfplotsset{yticklabel style={text width=3em,align=right}}
\begin{document}
	
	\maketitle
	
	
\begin{abstract}
	We use linear programming bounds to analyze point sets in the torus with respect to their optimality for problems in discrepancy theory and quasi-Monte Carlo methods. These concepts will be unified by introducing tensor product energies.
	
	We show that the canonical $3$-point lattice in any dimension is globally optimal among all $3$-point sets in the torus with respect to a large class of such energies. This is a new instance of universal optimality, a special phenomenon that is only known for a small class of highly structured point sets.
	
	In the case of $d=2$ dimensions it is conjectured that so-called Fibonacci lattices should also be optimal with respect to a large class of potentials. To this end we show that the $5$-point Fibonacci lattice is globally optimal for a continuously parametrized class of potentials relevant to the analysis fo the quasi-Monte Carlo method.
\end{abstract}

\section{Introduction}

\subsection{Preface on energy}

The notion of an energy of a point set is by now a classical one in physics, geometry and coding theory. Simply put, this asks to find a configuration of a given number of points in some space such that the total energy of the system, that is the sum of all the potential interactions between the points, is minimized. The case of the sphere (or more generally of two-point homogeneous spaces \cite{Wan52}, such as the projective plane) has garnered particular attention over the last couple of decades \cite{ADGMS23, And93, BD19, BDM16, BGMPV21, BGMPV22, BM19, BGM24, BG17, BDM99, BHS12, CGGKO20, CK07, DGS91, Gla23, GP20, Skr22, Yud93}. There, a straight forward notion of energy of a finite point set $X \sbse S^d \sbse \bbR^{d+1}$ for a potential function $c: (0, 2] \rightarrow \bbR$ is given by
$$
\sum_{\substack{\bx, \boldy \in X \\ \bx \neq \boldy}} c(\|\bx-\boldy\|),
$$
where $\|\cdot\|$ denotes the Euclidean norm in $\bbR^{d+1}$. A classical example would be Thomson's problem \cite{And93, Sch13, Tho04, Yud93} on $S^2$ with the (scaled) Coulomb potential $c(r) = r^{-1}$. In this particular case, the optimal configurations for $N=1, 2, 3, 4, 5, 6$ and $12$ points are known. Much more general, for certain dimensions $d$ and certain numbers of points $N$ there are point sets that are the minimizers of a large class of potentials simultaneously, a phenomenon known as \emph{universal optimality} \cite{CK07}.

In this paper we will consider the case of the torus, denoted by $[0, 1)^d$ (in the literature also found as $\bbT^d$) with opposite faces identified. Consequently, functions on $[0, 1)^d$ will be considered as periodic functions, extended to $\bbR^d$ in the obvious way if desired. So far, energies on the torus have mostly been studied where the interaction between particles depends on the Euclidean distance between them. In this sense, every point on the torus $[0, 1)^d$ is associated to a corresponding shifted (and possibly skewed) lattice in $\bbR^d$ and all possible interactions from a given point to all points of the corresponding lattice are considered, see \cite{CKMRV22, Cou06, CS12, HT23, SS06, Sch10, Ten23} for some references. In this context, tools from complex analysis and modular forms are frequently used.

To motivate the potentials that we will consider we start by discussing some topics concerning uniformly distributed point sets in $[0, 1)^d$.

\subsection{Quasi-Monte Carlo integration, discrepancy and diaphony}

Let $p > 0$ be a parameter. Consider the space $H^1_p$ of $1$-periodic, absolutely continuous functions $f: [0, 1) \rightarrow \bbR$ with
$$
\|f\|_{H^1_p}^2 \coloneqq \left(\int_0^1 f(x) \, \dx x\right)^2 + \frac 1p \int_0^1 f'(x)^2 \,\dx x 
$$
($f'$ denoting the first weak derivative of $f$). This space turns out to be a reproducing kernel Hilbert space with kernel function \cite{BT11, HO16, Wah95}
$$
K_p^1(x,y) = 1 + \frac p2 \left(\frac16 - |x-y| + |x-y|^2\right),
$$
that is function evaluation at any point $y \in [0, 1)$ can be realized via a scalar product with $K(\cdot, y)$ in $H^1_p$:
$$
f(y) = \left(\int_0^1 f(x) \,\dx x\right) \left(\int_0^1 K(x, y) \,\dx x\right) + \frac 1p \int_0^1 f'(x) \frac\partial{\partial x} K(x, y) \,\dx x
$$
for all $f \in H_p^1$ (in particular, function evaluation is continuous). Taking $d$-fold tensor products $H_p^d = H_p^1 \otimes ... \otimes H_p^1$ gives a reproducing kernel Hilbert space of functions $f: [0, 1)^d \rightarrow \bbR$ (a Sobolev space of periodic functions of dominating mixed smoothness) having the kernel
$$
K_p^d(\bx, \boldy) = \prod_{i=1}^d K_p^1(x_i, y_i)
$$
for $\bx, \boldy \in [0, 1)^d$. With this at hand we can consider the quasi-Monte Carlo integration algorithm \cite{DP10, Ham64, KN74, Nie92} for $f \in H_p^d$, that is we approximate
$$
\int_{[0, 1)^d} f(\bx) \,\dx\bx \approx \frac1N \sum_{\bx \in X} f(\bx)
$$
where $X \sbse [0, 1)^d$ is a finite set of sample nodes with $\#X = N$. The quality of this approximation depends on the choice of evaluation nodes $X$ and can be quantified via the worst case error \cite{DP10, NW10}
\begin{align} \label{eq:wce_kpd}
	\sup\limits_{\|f\|_{H_p^d} \leq 1} \left|\int_{[0, 1)^d} f(\bx) \,\dx\bx - \frac1N \sum_{\bx \in X} f(\bx)\right|^2 = -1 + \frac1{N^2} \sum_{\bx, \boldy \in X} K_p^d(\bx, \boldy).
\end{align}
The expression on the right hand side is related to the mainly geometric notion of periodic $L_2$-discrepancy on the torus \cite{DHP20, HKP21, Lev95}
$$
\Lp(X)^2 \coloneqq -\frac{N^2}{3^d} + \sum_{\bx, \boldy \in X} \prod_{i=1}^d \left(\frac12 - |x_i-y_i| + |x_i-y_i|^2\right)
$$
(unnormalized, as given in \cite{HKP21}), in fact for $p=6$ it only differs from the worst case integration error by a factor of $N^2/3^d$. It is also similar to the notion of diaphony \cite{Lev95, Zin76}
$$
F_N(X)^2 \coloneqq \sum_{\bm \in \bbZ^d - \mathbf{0}} \left(\prod_{i=1}^d \max\{1, |m_i|\}\right)^{-2} \left|\frac1N \sum_{\bx \in X} \exp(2\pi\iu \bm^\top \bx)\right|^2
$$
see \cite{HKP21, Pil23} for details. For small $N$ or $d$ optimal point configurations minimizing the discrepancy have been determined in \cite{PVC05, Whi77} (note that they use a slightly different notion of discrepancy) and \cite{HO16}.

\subsection{Tensor product energies}

Another interpretation, more akin to the notion of energy, is to observe that minimizing the worst case integration error \eqref{eq:wce_kpd} is equivalent to minimizing
\begin{align} \label{eq:energy_kpd}
	\sum_{\substack{\bx, \boldy \in X \\ \bx \neq \boldy}} K_p^d(\bx, \boldy) = \sum_{\substack{\bx, \boldy \in X \\ \bx \neq \boldy}} \prod_{i=1}^d \left(1+\frac p2\left(\frac16 - |x_i-y_i| + |x_i-y_i|^2\right)\right).
\end{align}
We can think of every point as a particle whose interaction is given by some potential function and the potential between any two points $\bx$ and $\boldy$ only depends on
\begin{align} \label{torus_geometry}
	(\min\{|x_1-y_1|, 1-|x_1-y_1|\}, \dots, \min\{|x_d-y_d|, 1-|x_d-y_d|\})
\end{align}
(since $t^2-t = (1-t)^2 - (1-t)$ for $0 \leq t \leq 1$). The terms in the components are respectively the geodesic distances between two points in the torus $[0, 1)$ (similar to (8) in \cite{HO16}). Minimizing the worst case integration error is thus equivalent to minimizing this sort of energy given by the kernel function $K_p^d$.

In the case of $d = 2$ and $p \in \{1, 6\}$ the minimizers of \eqref{eq:energy_kpd} were determined in \cite{HO16} for $N \leq 16$ points. Notably, it was observed that if $N = F_\ell$ is a Fibonacci number (initializing $F_0=0, F_1=1$) in this range, namely $1, 2, 3, 5, 8, 13$, the algorithm found that the \emph{Fibonacci lattices} given by
\begin{align} \label{eq:fib_lat}
	\Phi_\ell \coloneqq \left\{\left(\frac{m}{F_\ell}, \left\{\frac{m F_{\ell-1}}{F_\ell}\right\}\right): m=0, 1, \dots, F_\ell-1\right\}
\end{align}
are the global optimizers for the worst case integration error for their respective numbers of points. Prior to that, it was already known that the Fibonacci lattices are at least asymptotically of optimal order, matching (up to a multiplicative constant) the lower bound given by (for $d = 2$)
$$
-1 + \frac1{N^2} \sum_{\bx, \boldy \in X} K_p^2(\bx, \boldy) \gtrsim \frac{\log N}{N^2},
$$
see \cite{BTY12-1, BTY12-2, Bor17, DTU18, HKP21, HMOU16, Rot54, Tem92}. 
In this paper we investigate the question whether Fibonacci lattices are also optimal with respect to a more general notion of energy. The following definition summarizes our setup.

\begin{defi} \label{def:energy}
	Let $c: [0, 1)^d \rightarrow \bbR$ be a $1$-periodic function in every component. We say that $c$ is a \textbf{potential} if $c(t_1, \dots, t_d) = c(t_{\tau(1)}, \dots, t_{\tau(d)})$ for any permutation $\tau$ of $[d]$ and $c(t_1, t_2, \dots, t_d) = c(1-t_1, t_2, \dots, t_d)$ for all $\bt \in [0, 1)^d$. For a finite point set $X \sbse [0, 1)^d$ the \textbf{tensor product energy} with respect to the potential $c$ is given by
	$$
	E_c(X) \coloneqq \sum_{\substack{\bx, \boldy \in X \\ \bx \neq \boldy}} c(\bx - \boldy).
	$$
\end{defi}

The conditions on the potential guarantee that the potential between two point sets only depends on \eqref{torus_geometry}. A special example of a tensor product energy is given by \eqref{eq:energy_kpd} with (parametrized) potential
\begin{align} \label{eq:potential_for_kpd}
	c_p^d(\bt) = \prod_{i=1}^d \left(1 + \frac p2\left(\frac16 - t_i + t_i^2\right)\right).
\end{align}
This potential tensorizes, hence the name ``tensor product potential'', although for the sake of generality we have defined it more broadly.

\subsection{Main results}

The main contribution of this paper is to show that certain point sets are globally optimal with respect to a wide range of potentials. We start with the following result concerning the $5$-point Fibonacci lattice.

\begin{thm} \label{thm:fib_lat_5}
	Let $d=2$ and let the potential $c = c_p^2$ be given by \eqref{eq:potential_for_kpd}. Then the $5$-point Fibonacci lattice
	$$
	\Phi_5 = \left\{(0, 0), \left(\frac15, \frac35\right), \left(\frac25, \frac15\right), \left(\frac35, \frac45\right), \left(\frac45, \frac25\right)\right\}
	$$
	minimizes $E_{c_p^2}$ among all $5$-point sets in $[0, 1)^2$ for the range $0 \leq p \leq 9$.
\end{thm}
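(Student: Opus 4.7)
The plan is to apply the linear programming (LP) method on the torus. For any continuous $1$-periodic $h : [0,1)^2 \to \bbR$ with $\hat h(\bm) \geq 0$ for all $\bm \neq \mathbf 0$ and $h \leq c_p^2$ pointwise, we have for every $5$-point set $X \sbse [0,1)^2$
\[
E_{c_p^2}(X) \geq \sum_{\bx \neq \boldy} h(\bx-\boldy) = \sum_{\bm \in \bbZ^2} \hat h(\bm)\,\bigl|\hat X(\bm)\bigr|^2 - 5\,h(\mathbf 0) \geq 25\,\hat h(\mathbf 0) - 5\,h(\mathbf 0),
\]
where $\hat X(\bm) = \sum_{\bx \in X} \exp(2\pi\iu\, \bm^\top \bx)$. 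The goal is to construct $h = h_p$ making the chain of inequalities sharp at $X = \Phi_5$.

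I would first identify the combinatorics of $\Phi_5$. The symmetries built into the potential (the reflections $t_i \mapsto 1 - t_i$ and transposition of coordinates) fuse all four nonzero elements of the cyclic group $\Phi_5$ into the single orbit of $(1/5, 2/5)$, giving $E_{c_p^2}(\Phi_5) = 20\, c_p^2(1/5, 2/5) = 20(1 + p/300)(1 - 11p/300)$. The dual lattice $\Lambda^\ast = \{\bm \in \bbZ^2 : m_1 + 3m_2 \equiv 0 \pmod 5\}$ has smallest nonzero elements $\pm(2,1)$ and $\pm(1,-2)$, so $\Lambda^\ast \cap \{-1,0,1\}^2 = \{\mathbf 0\}$, while $\widehat{\Phi_5}(\bm)$ equals $5$ on $\Lambda^\ast$ and $0$ elsewhere. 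Consequently, any $h_p$ whose Fourier support lies in $\{-1,0,1\}^2$ automatically satisfies $\hat h_p(\bm)\,|\widehat{\Phi_5}(\bm)|^2 = 0$ for every $\bm \neq \mathbf 0$, so the second inequality above is tight at $\Phi_5$ by construction.

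For the first inequality to be tight at $\Phi_5$ as well, $h_p$ must agree with $c_p^2$ at all nonzero elements of $\Phi_5$. I would therefore take the symmetry-adapted ansatz
\[
h_p(\bt) = a(p) + 2 b(p)\,\bigl(\cos 2\pi t_1 + \cos 2\pi t_2\bigr) + 4 c(p)\,\cos 2\pi t_1 \cos 2\pi t_2
\]
and fix $a, b, c$ by imposing $h_p(1/5, 2/5) = c_p^2(1/5, 2/5)$ together with $\nabla h_p(1/5, 2/5) = \nabla c_p^2(1/5, 2/5)$; by symmetry these three scalar equations simultaneously enforce zeroth- and first-order contact at all four images of this point. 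Should three degrees of freedom prove insufficient to guarantee both the Fourier positivity $b(p), c(p) \geq 0$ and the pointwise bound $h_p \leq c_p^2$ on the entire range $0 \leq p \leq 9$, I would enlarge the ansatz by the next symmetry orbits outside $\Lambda^\ast$, namely the modes $\cos 4\pi t_i$ and $\cos 4\pi t_1 \cos 4\pi t_2$, and spend the extra freedom on Hessian-level matching at the contact points.

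The main obstacle is the verification of $c_p^2(\bt) - h_p(\bt) \geq 0$ over all $\bt \in [0,1)^2$ and $p \in [0, 9]$. By construction this function has isolated zeros of order at least two at $(1/5, 2/5)$ and its three symmetry images, and the task is to rule out any additional zero. I expect the argument to proceed by first exploiting the eightfold symmetry group of the potential to reduce to a fundamental domain such as $\{0 \leq t_2 \leq t_1 \leq 1/2\}$, and then establishing positivity either by an explicit sum-of-squares decomposition in the variables $\cos 2\pi t_i$ (with $p$ entering rationally) or by isolating a small neighbourhood of each contact point for a local Hessian argument while bounding $c_p^2 - h_p$ uniformly away from zero on the complement. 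The specific cutoff $p = 9$ most likely marks the value at which the auxiliary inequality first saturates at a second, extraneous point outside $\Phi_5$, so the verification ultimately narrows down to confirming that no such additional contact occurs for $p < 9$.
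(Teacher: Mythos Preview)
Your LP framework and the combinatorics of $\Phi_5$ are correct and match the paper's setup. The gap is in the construction of the minorant.

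\textbf{The bilinear ansatz fails.} With $\beta(s_1,s_2)=a+b'(s_1+s_2)+c'\,s_1s_2$ (in the arccos variables $s_i=\cos 2\pi t_i$), touching $\gamma_p=f_p(s_1)f_p(s_2)$ from below at an interior point forces gradient matching there, so at $(u,v)=(\cos\tfrac{2\pi}{5},\cos\tfrac{4\pi}{5})$ one has $b'+c'v=f_p'(u)f_p(v)$ and $b'+c'u=f_p(u)f_p'(v)$, giving
\[
c'=\frac{f_p(u)f_p'(v)-f_p'(u)f_p(v)}{u-v}.
\]
Using $f_p'(s)=\dfrac{p\,(1-2t)}{4\pi\sin 2\pi t}$ with $t=\tfrac{\arccos s}{2\pi}$, one finds to leading order in $p$
\[
c'\sim \frac{2}{\sqrt5}\bigl(f_p'(v)-f_p'(u)\bigr)=\frac{p}{10\pi\sqrt5}\left(\frac{1}{\sin(\pi/5)}-\frac{3}{\sin(2\pi/5)}\right)<0,
\]
since $3/\sin(2\pi/5)=3/(2\sin(\pi/5)\cos(\pi/5))>1/\sin(\pi/5)$. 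Thus the $(1,1)$ Fourier coefficient is \emph{negative} for every $p>0$, and your three-parameter ansatz cannot satisfy the positivity constraint. Your fallback to degree-$2$ modes (adding $(2,0),(0,2),(2,2)$) adds only two symmetric parameters while $(2,1)$ and $(1,2)$ are forbidden; this is still not enough, and the paper in fact needs degree~$4$.

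\textbf{What the paper does instead.} The key device you are missing is a one-dimensional step: let $h_p$ be the degree-$4$ Hermite interpolant of $f_p$ at the nodes $-1,v,v,u,u$. The Hermite remainder formula gives $f_p-h_p=\tfrac{f_p^{(5)}(\xi)}{5!}(s+1)(s-v)^2(s-u)^2\ge 0$ because $f_p$ is \emph{absolutely monotone} on $[-1,1]$. This produces $h_p(s_1)h_p(s_2)\le\gamma_p$ with all Chebyshev coefficients nonnegative and equality at the lattice points---but it contains the forbidden degrees. The second (and hardest) step is to subtract a nonnegative polynomial $g_p$ of the form $(1+2s_1+2s_2)^2(\cdots)+(1+4s_1s_2)^2(\cdots)$, tailored so that $g_p(u,v)=0$, the forbidden coefficients of $h_p\otimes h_p$ are exactly cancelled, and the remaining coefficients stay nonnegative; nine auxiliary parameters are solved for explicitly and the resulting sign conditions are checked as affine inequalities in $p$. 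The bound $p\le 9$ is not a geometric saturation of $c_p^2-h_p$ as you conjecture, but the point at which one of these auxiliary parameters (specifically $\mu_3\propto 9-p$) changes sign.
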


This generalizes the results from \cite{HO16} for $N=5$. The method there is computationally heavy and can only check that $\Phi_5$ is optimal for any individual $p$. Here however we will get the optimality of an entire range by checking only a few, elementary inequalities.

For the next theorem we need to introduce a certain transformation.

\begin{defi} \label{defi:arccos_trafo}
	For a potential $c: [0, 1)^d \rightarrow \bbR$ define the \textbf{arccos-transform}
	$$
	\gamma: [-1, 1]^d \rightarrow \bbR, \gamma(s_1, \dots, s_d) \coloneqq c\left(\frac{\arccos s_1}{2\pi}, \dots, \frac{\arccos s_d}{2\pi}\right).
	$$
\end{defi}

Under this transformation we have 
$$
E_c(X) = \sum_{\substack{\bx, \boldy \in X \\ \bx \neq \boldy}} \gamma\left(\cos(2\pi(x_1-y_1)), \dots, \cos(2\pi(x_d-y_d))\right).
$$

\begin{thm} \label{thm:3_pt_lat}
	Let the dimension $d \in \bbN$ be arbitrary and $c: [0, 1)^d \rightarrow \bbR$ be a potential with the corresponding arccos-transform $\gamma: [-1, 1]^d \rightarrow \bbR$. Assume that $\gamma$ is convex and increasing in the sense that if $\bs^1 \leq \bs^2$ componentwise then $\gamma(\bs^1) \leq \gamma(\bs^2)$. Then
	$$
	\cT^d \coloneqq \left\{\frac13 (k, \dots, k): k = 0, 1, 2\right\}
	$$
	minimizes $E_c$ among all $3$-point sets in $[0, 1)^d$.
\end{thm}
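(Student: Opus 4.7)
The plan is to write $E_c$ explicitly in terms of $\gamma$ evaluated at three cosine vectors, observe that the pairwise differences around a triangle satisfy a scalar constraint in each coordinate, and then conclude by Jensen plus monotonicity.

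Given $X = \{\bx^1, \bx^2, \bx^3\} \sbse [0,1)^d$, define for the three cyclic pairs $(i,j) \in \{(1,2), (2,3), (3,1)\}$ the vectors $\bs^{ij} \in [-1,1]^d$ by $s^{ij}_k = \cos(2\pi(x^i_k - x^j_k))$. The parity $c(-\bt) = c(\bt)$ (which follows from the $1$-periodicity together with $c(1-t_1, t_2, \dots, t_d) = c(\bt)$ and permutation symmetry) implies that opposite pairs contribute equally, hence
$$
E_c(X) = 2\bigl(\gamma(\bs^{12}) + \gamma(\bs^{23}) + \gamma(\bs^{31})\bigr).
$$
For the lattice $\cT^d$, every pairwise coordinate difference is $\pm 1/3$, so $\bs^{ij} = (-1/2, \dots, -1/2)$ for each pair, and $E_c(\cT^d) = 6\gamma(-1/2, \dots, -1/2)$.

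The core step is the elementary one-dimensional inequality: if $\alpha + \beta + \gamma = 0$ (or more generally $\equiv 0 \pmod{2\pi}$), then $\cos\alpha + \cos\beta + \cos\gamma \geq -3/2$, with equality precisely when all three angles are $\equiv \pm 2\pi/3$. (A short Lagrange-multiplier argument in $\alpha, \beta$ with $\gamma = -\alpha - \beta$ yields this.) Applied coordinatewise with $\alpha_k = 2\pi(x^1_k - x^2_k)$, $\beta_k = 2\pi(x^2_k - x^3_k)$, $\gamma_k = 2\pi(x^3_k - x^1_k)$, which sum to $0$ by construction, this gives
$$
s^{12}_k + s^{23}_k + s^{31}_k \geq -\tfrac32 \qquad \text{for every } k \in [d].
$$

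To finish, convexity of $\gamma$ together with Jensen's inequality yields
$$
\frac{\gamma(\bs^{12}) + \gamma(\bs^{23}) + \gamma(\bs^{31})}{3} \geq \gamma\!\left(\frac{\bs^{12} + \bs^{23} + \bs^{31}}{3}\right),
$$
and the argument on the right has every coordinate $\geq -1/2$ by the key step. Since $\gamma$ is componentwise increasing, the right-hand side is at least $\gamma(-1/2, \dots, -1/2)$. Multiplying by $6$ gives $E_c(X) \geq E_c(\cT^d)$, as required. The only subtle ingredient is the scalar cosine inequality; everything else is immediate from the hypotheses on $\gamma$. The equality analysis (relevant if one wants uniqueness up to translation) follows the same lines, using that the cosine inequality is tight only at the equilateral configuration in each coordinate.
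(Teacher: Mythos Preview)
Your proof is correct and, unpacked, rests on the same two ingredients as the paper's argument, though you present them without the linear-programming scaffolding. The paper invokes its Theorem~\ref{thm:magic_function}: it takes an affine supporting function $\beta(\bs)=\beta_0+\beta_1(s_1+\dots+s_d)$ of $\gamma$ at $-\tfrac12\mathbf{1}$ (this is exactly the role Jensen plus monotonicity play for you), and then uses positive-definiteness of the single first-order Fourier mode. That last step, when unwound for three points in one coordinate, is nothing but $\bigl|e^{2\pi\iu x^1_k}+e^{2\pi\iu x^2_k}+e^{2\pi\iu x^3_k}\bigr|^2\ge 0$, which expands to $3+2(s^{12}_k+s^{23}_k+s^{31}_k)\ge 0$, i.e.\ precisely your ``elementary cosine inequality''. (So you can replace the Lagrange-multiplier sketch with this one line.) Your route is pleasantly self-contained and avoids any reference to Fourier analysis or the LP bound; the paper's framing has the advantage that the very same machinery of magic functions and forbidden degrees is what carries the much harder $5$-point Fibonacci case later on.
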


This theorem gives the \emph{universal optimality property} for the $3$-point lattice $\cT^d$ in any dimension, meaning that it is the global minimizer among all $3$-point sets for a large, natural class of potentials. Observe that any $3$-point rational lattice (see Definition \ref{def:rat_lat} below) in $[0, 1)^d$ can be transformed into $\cT^d$ under a torus symmetry (coordinate shifts, reflections and permutations) so that we might speak of \emph{the} rational $3$-point lattice. In particular, in $d=2$ dimensions, $\cT^2$ is equivalent to the $3$-point Fibonacci lattice $\Phi_4$.

\begin{rem}
	Note that the case for $N=2$ is easily dealt with, since then the point set $\{(0, \dots, 0), (1/2, \dots, 1/2)\}$ is the minimizer for a potential $c$ if and only if $c$ has a global minimum in $\bt = (1/2, \dots, 1/2)$.
\end{rem}

\subsection{Notation}

The set of integers will be denoted by $\bbZ$, the set of nonnegative numbers by $\bbN_0 = \{0, 1, \dots\}$. We will also use $[n] = \{1, \dots, n\}$. The cardinality of a set $X$ will by written using $\#X$. The set difference with a singleton will be written as $X - x \coloneqq X \sm \{x\}$.

Vectors will be written in bold type (for example $\bm, \bs, \bx$), their components by usual type (for example $m_i, s_i, x_i$). Similarly, the all-zeros and all-ones vectors will be denoted by $\mathbf{0}$ and $\mathbf{1}$ respectively.

The coefficients of a decomposition of a function $f$ into an orthogonal basis (notably into a Fourier series or via Chebyshev polynomials) will be denoted by $\hat f$. It will be clear from context which basis functions are used.

The fractional part of $x \in \bbR$ will be denoted by $\{x\} \coloneqq x - \lfloor x \rfloor$ (where context will make it clear that this is not a singleton set).

\section{The linear programming bound}

Let us state here the main tool for proving lower bounds. This is an adaptation of the method on the sphere, see \cite{CK07, DGS91, Yud93}, to the torus.

\begin{thm} \label{thm:continuous_bound}
	Let $c: [0,1 )^d \rightarrow \bbR$ be a potential. Let $b: [0, 1)^d \rightarrow \bbR$ be such that $c(\bt) \geq b(\bt)$ for all $\bt \in [0, 1)^d$ and for its Fourier expansion we have
	$$
	b(\bt) = \sum_{\bm \in \bbZ^d} \hat{b}(\bm) \exp(2\pi\iu \bm^\top \bt)
	$$
	(converging pointwise) with $\hat{b}(\bm) \geq 0$ for all $\bm \in \bbZ^d - \mathbf{0}$. Then
	$$
	E_c(X) \geq N^2 \hat{b}(\mathbf{0}) - N b(\mathbf{0})
	$$
	for all $X \subseteq [0, 1)^d$ with $\#X = N$.
\end{thm}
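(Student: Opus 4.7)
The plan is to adapt the classical linear programming argument (as used on the sphere in the references cited) to the Fourier setting on the torus. First I would exploit the pointwise inequality $c \geq b$ to replace $c$ by $b$: since the sum defining $E_c(X)$ only runs over distinct pairs $\bx \neq \boldy$, we immediately get $E_c(X) \geq E_b(X)$. Next I would complete the sum by including the diagonal, writing
$$
E_b(X) = \sum_{\bx, \boldy \in X} b(\bx - \boldy) - N \cdot b(\mathbf{0}),
$$
since there are exactly $N$ diagonal terms, each equal to $b(\mathbf{0})$.

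The crux of the argument is to evaluate the full double sum $\sum_{\bx, \boldy \in X} b(\bx - \boldy)$ via the given pointwise Fourier expansion. Substituting and interchanging the (inner, finite) sum over $X \times X$ with the Fourier sum, I would obtain
$$
\sum_{\bx, \boldy \in X} b(\bx - \boldy) = \sum_{\bm \in \bbZ^d} \hat{b}(\bm) \sum_{\bx, \boldy \in X} \exp(2\pi\iu \bm^\top (\bx - \boldy)) = \sum_{\bm \in \bbZ^d} \hat{b}(\bm) \left| \sum_{\bx \in X} \exp(2\pi\iu \bm^\top \bx) \right|^2.
$$
All the squared moduli are non-negative, and by hypothesis $\hat{b}(\bm) \geq 0$ for every $\bm \neq \mathbf{0}$. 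Dropping every non-zero frequency thus can only decrease the sum, so what remains is the $\bm = \mathbf{0}$ term, for which the squared modulus equals $N^2$. This yields
$$
\sum_{\bx, \boldy \in X} b(\bx - \boldy) \geq \hat{b}(\mathbf{0}) \cdot N^2.
$$

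Combining the two estimates gives $E_c(X) \geq N^2 \hat{b}(\mathbf{0}) - N \cdot b(\mathbf{0})$, as claimed. I do not expect a genuine obstacle: the main point of care is to justify interchanging the double sum over the finite set $X \times X$ with the Fourier series for $b$, which is legitimate because the Fourier expansion is assumed to converge pointwise and only finitely many evaluations $b(\bx - \boldy)$ are involved, so no uniform convergence or absolute summability is needed beyond the hypotheses.
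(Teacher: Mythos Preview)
Your proof is correct and follows essentially the same approach as the paper's own proof: both replace $c$ by $b$ via the pointwise bound, complete the off-diagonal sum to a full double sum at the cost of $N b(\mathbf{0})$, expand via Fourier, recognize the squared modulus, and drop the nonnegative terms at nonzero frequencies. Your additional remark on why the interchange of the finite sum with the Fourier series is legitimate is a welcome clarification that the paper leaves implicit.
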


A function on the torus $[0, 1)^d$ all of whose Fourier coefficients are nonnegative is also called \emph{positive definite}. The pointwise convergence of $b$ will turn out to not be a problem for us since the functions that we will construct further below only us a finite set of indices, that is we only consider trigonometric polynomials.

\begin{proof} [Proof of Theorem \ref{thm:continuous_bound}]
	Let $b$ be as in the assumptions, then
	\begin{align} \label{ineq:lp}
		\begin{split}
			E_c(X) & = \sum_{\substack{\bx, \boldy \in X \\ \bx \neq \boldy}} c(\bx - \boldy) \geq \sum_{\substack{\bx, \boldy \in X \\ \bx \neq \boldy}} b(\bx - \boldy) = - N b(\mathbf{0}) + \sum_{\bx, \boldy \in X} b(\bx - \boldy) \\
			& = - N b(\mathbf{0}) + \sum_{\bm \in \bbZ^d} \hat{b}(\bm) \sum_{\bx, \boldy \in X} \exp(2\pi\iu \bm^\top (\bx - \boldy)) \\
			& = - N b(\mathbf{0}) + \sum_{\bm \in \bbZ^d} \hat{b}(\bm) \left|\sum_{\bx\in X} \exp(2\pi\iu \bm^\top \bx)\right|^2 \\
			& \geq N^2 \hat{b}(\mathbf{0}) - N b(\mathbf{0}).
		\end{split}
	\end{align}
\end{proof}

For the analysis we will reformulate this statement in terms of $\gamma$. For this we will need the \emph{Chebyshev polynomials}
$$
T_m(s) \coloneqq \cos(m \arccos s).
$$

\begin{cor} \label{cor:arccos_transformed}
	Let $c: [0, 1)^d \rightarrow \bbR$ be a potential with arccos-transform $\gamma: [-1, 1]^d \rightarrow \bbR$. Let $\beta: [-1, 1]^d \rightarrow \bbR$ fulfill $\gamma(\bs) \geq \beta(\bs)$ for all $\bs \in [-1, 1]^d$ and
	$$
	\beta(\bs) = \sum_{\bm \in \bbN_0^d} \hat\beta(\bm) \prod_{i=1}^d T_{m_i}(s_i)
	$$
	(converging pointwise) with $\hat\beta(\bm) \geq 0$ for all $\bm \in \bbN_0^d - \mathbf{0}$. Then
	$$
	E_c(X) \geq N^2 \hat\beta(\mathbf{0}) - N \beta(\mathbf{1})
	$$
	for all $X \sbse [0, 1)^d, \#X = N$.
\end{cor}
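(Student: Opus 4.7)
The plan is to reduce Corollary~\ref{cor:arccos_transformed} directly to Theorem~\ref{thm:continuous_bound} by pulling $\beta$ back to the torus via the substitution $s_i = \cos(2\pi t_i)$. Concretely, I would set $b(\bt) := \beta(\cos(2\pi t_1), \dots, \cos(2\pi t_d))$. The symmetry conditions on $c$ from Definition~\ref{def:energy} (permutation-invariance and evenness in each coordinate modulo $1$), combined with the fact that $\arccos(\cos\theta)$ equals $\theta$ on $[0,\pi]$ and $2\pi-\theta$ on $[\pi, 2\pi]$, yield the pointwise identity
$$
c(\bt) = \gamma(\cos(2\pi t_1), \dots, \cos(2\pi t_d)) \qquad \text{for all } \bt \in [0, 1)^d.
$$
The hypothesis $\gamma \ge \beta$ therefore gives $c \ge b$ pointwise.

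Next I would convert the Chebyshev expansion of $\beta$ into a Fourier expansion of $b$. Using the defining identity $T_m(\cos\theta) = \cos(m\theta)$ together with the elementary product-to-sum formula for cosines, each basis product expands as
$$
\prod_{i=1}^d T_{m_i}(\cos(2\pi t_i)) = \prod_{i=1}^d \cos(2\pi m_i t_i) = \frac{1}{2^{\#\supp(\bm)}} \sum_{\substack{\bm' \in \bbZ^d \\ |m'_i| = m_i \ \forall i}} \exp(2\pi\iu \bm'^\top \bt),
$$
where $\supp(\bm) = \{i : m_i \neq 0\}$. Reordering the double sum and collecting terms gives
$$
b(\bt) = \sum_{\bm' \in \bbZ^d} \hat b(\bm') \exp(2\pi\iu \bm'^\top \bt), \qquad \hat b(\bm') = \frac{\hat\beta(|\bm'|)}{2^{\#\supp(\bm')}},
$$
with $|\bm'| := (|m'_1|, \dots, |m'_d|)$. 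From this one reads off $\hat b(\bm') \ge 0$ for every $\bm' \neq \mathbf 0$, $\hat b(\mathbf 0) = \hat\beta(\mathbf 0)$, and $b(\mathbf 0) = \beta(\mathbf 1)$. Plugging these identifications into Theorem~\ref{thm:continuous_bound} then yields
$$
E_c(X) \ge N^2 \hat b(\mathbf 0) - N b(\mathbf 0) = N^2 \hat\beta(\mathbf 0) - N \beta(\mathbf 1),
$$
which is precisely the asserted bound.

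The only mildly delicate point is the interchange of summations turning the Chebyshev expansion into the Fourier expansion, which requires that the Fourier series of $b$ converges pointwise as demanded by Theorem~\ref{thm:continuous_bound}. Since each Chebyshev product expands into a \emph{finite} sum of exponentials, this rearrangement is valid whenever the Chebyshev series of $\beta$ converges pointwise, which is exactly the hypothesis. In the applications envisioned later in the paper $\beta$ will be a Chebyshev polynomial, so both sums are finite and no analytic subtlety arises.
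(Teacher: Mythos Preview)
Your proof is correct and follows essentially the same route as the paper's: both define the auxiliary function $b$ on the torus with Fourier coefficients $\hat b(\bm') = 2^{-\#\supp(\bm')}\hat\beta(|\bm'|)$ and reduce to Theorem~\ref{thm:continuous_bound}. The only cosmetic difference is that you define $b$ directly as the composition $b(\bt)=\beta(\cos(2\pi t_1),\dots,\cos(2\pi t_d))$ and then read off its Fourier coefficients, whereas the paper starts from the Fourier side and then identifies $b$ with this composition; similarly, you verify $c(\bt)=\gamma(\cos(2\pi t_1),\dots,\cos(2\pi t_d))$ globally via the coordinate symmetries, while the paper first checks $c\ge b$ on $[0,1/2]^d$ and extends by the same symmetries.
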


\begin{proof}
	Let $\beta$ be as in the assumption and consider the function $b: [0, 1)^d \rightarrow \bbR$ given by
	\begin{align} \label{eq:coeff_b_beta}
		b(\bt) = \sum_{\bm \in \bbZ^d} 2^{-\#\{i \in [d]: m_i \neq 0\}} \hat\beta(|m_1|, \dots, |m_d|) \exp(2\pi\iu \bm^\top \bt).
	\end{align}
	We will show that $b$ is a valid function fulfilling the requirements of Theorem \ref{thm:continuous_bound}. Indeed, by definition it is clear that $b$ only has nonnegative Fourier coefficients and it remains to check that $c \geq b$. For this note that
	\begin{align*}
		b(\bt) & = \sum_{\bm \in \bbN_0^d}  \hat\beta(\bm) \sum_{\substack{\underline\eps \in \{-1, 1\}^d \\ \eps_i \in \fS(m_i)}} \prod_{i=1}^d \frac{\exp(2\pi\iu \eps_i m_i t_i)}{\#\fS(m_i)} \\
		& = \sum_{\bm \in \bbN_0^d} \hat\beta(\bm) \prod_{i=1}^d \left(\frac1{\#\fS(m_i)}\sum_{\substack{\eps \in \fS(m_i)}} \exp(2\pi\iu \eps m_i t_i)\right) \\
		& = \sum_{\bm \in \bbN_0^d} \hat\beta(\bm) \prod_{i=1}^d \cos(2\pi m_i t_i),
	\end{align*}
	where $\underline\eps = (\eps_1, \dots, \eps_d)$ and
	$$
	\fS(m) \coloneqq \begin{cases}
		\{1\} & , m = 0 \\
		\{-1, 1\} & , m \neq 0.
	\end{cases}
	$$
	In particular we get that
	$$
	b\left(\frac{\arccos s_1}{2\pi}, \dots, \frac{\arccos s_d}{2\pi}\right) = \sum_{\bm \in \bbN_0^d} \hat\beta(\bm) \prod_{i=1}^d T_{m_i}(s_i) = \beta(\bs),
	$$
	that is $\beta$ is the arccos-transform of $b$. By $\gamma \geq \beta$ we thus have $c(\bt) \geq b(\bt)$ for $\bt \in [0, 1/2]^d$ (the range of the function $s \mapsto
	\arccos(s)/2\pi$). However, by \eqref{torus_geometry}, which by construction is also fulfilled by $b$, the functions $c$ and $b$ are completely determined by their values on $[0, 1/2]^d$, so that we even have $c(\bt) \geq b(\bt)$ for all $\bt \in [0, 1)^d$. We may thus apply Theorem \ref{thm:continuous_bound}, where it remains to note that $\hat b(\mathbf{0}) = \hat\beta(\mathbf{0})$ and $b(\mathbf{0}) = \beta(\mathbf{1})$.
\end{proof}

Let us finish this section by inspecting when we get equality in the LP-bound. Let us start by the following special class of point sets.

\begin{defi} \label{def:rat_lat}
	A \textbf{rational lattice} is a finite set $\Lambda \sbse [0, 1)^d, \#\Lambda = N$ of the form
	$$
	\left\{\left(\frac mN, \left\{\frac{m h_2}N\right\}, \dots, \left\{\frac{m h_d}N\right\}\right): m=0, 1, \dots, N-1\right\},
	$$
	where $h_2, \dots, h_d \in \{1, \dots, N-1\}$ with $\gcd(h_2, N) = \dots = \gcd(h_d, N) = 1$. The vector $\frac1N(1, h_2, \dots, h_d)$ is called the \textbf{generating vector} of $\Lambda$.
\end{defi}

\begin{thm} \label{thm:magic_function}
	Let $\Lambda \sbse [0, 1)^d$ be a rational lattice of size $\#\Lambda = N < \infty$ and generating vector $\frac1N(1, h_2, \dots, h_d)$. Let $c: [0, 1)^d \rightarrow \bbR$ be a potential with arccos-transform $\gamma: [-1, 1]^d \rightarrow \bbR$. Let $\beta: [-1, 1]^d \rightarrow \bbR$ be such that:
	\begin{itemize}
		\item [(i)] $\gamma(\bs) \geq \beta(\bs)$ for all $\bs \in [-1, 1]^d$,
		
		\item [(ii)] $\beta(\bs) = \sum_{\bm \in \bbN_0^d} \hat\beta(\bm) \prod_{i=1}^d T_{m_i}(s_i)$ (converging pointwise) with $\hat\beta(\bm) \geq 0$ for all $\bm \in \bbN_0^d-\mathbf{0}$,
		
		\item [(iii)] $\beta(\cos(2\pi s_1), \dots, \cos(2\pi s_d)) = \gamma(\cos(2\pi s_1), \dots, \cos(2\pi s_d))$ for all $\bs \in \Lambda - \mathbf{0}$,
		
		\item [(iv)] $\hat\beta(\bm) = 0$ for all $\bm \in \bbN_0^d - \mathbf{0}$ for which there are signs $(\eps_1, \dots, \eps_d) \in \{-1, 1\}^d$ such that $\eps_1 m_1 + \eps_2 m_2 h_2 + \dots + \eps_d m_d h_d \equiv 0 \mod N$.
	\end{itemize}
	Then $E_c(X) \geq E_c(\Lambda)$ for all $X \sbse [0, 1)^d, \#X = N$.
\end{thm}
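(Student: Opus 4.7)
The plan is to apply Corollary~\ref{cor:arccos_transformed} with the given $\beta$ to obtain the lower bound $E_c(X) \geq N^2 \hat\beta(\mathbf{0}) - N \beta(\mathbf{1})$ for every $N$-point set $X$, and then to verify that this bound is attained with equality when $X = \Lambda$. Conditions (i) and (ii) are precisely the hypotheses of that corollary, so the inequality is immediate. The task is therefore to show $E_c(\Lambda) = N^2 \hat\beta(\mathbf{0}) - N \beta(\mathbf{1})$, which I would do by tracing when equality is achieved in each step of the chain \eqref{ineq:lp}.

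First, since $\Lambda$ is a subgroup of the torus $[0, 1)^d / \bbZ^d$ under addition modulo one, for every $\bz \in \Lambda$ there are exactly $N$ pairs $(\bx, \boldy) \in \Lambda \times \Lambda$ with $\bx - \boldy \equiv \bz \bmod \bbZ^d$. Hence
\[
E_c(\Lambda) = N \sum_{\bz \in \Lambda - \mathbf{0}} c(\bz).
\]
Condition (iii) is the arccos-transformed statement that $c(\bz) = b(\bz)$ for all $\bz \in \Lambda - \mathbf{0}$, where $b$ is the trigonometric function constructed from $\beta$ via \eqref{eq:coeff_b_beta} in the proof of Corollary~\ref{cor:arccos_transformed}; this replaces $c$ by $b$ and gives $E_c(\Lambda) = N \sum_{\bz \in \Lambda} b(\bz) - N b(\mathbf{0})$.

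Next I would plug in the Fourier expansion of $b$ and swap the order of summation. The inner character sum evaluates to
\[
\sum_{\bz \in \Lambda} \exp(2\pi\iu \bm^\top \bz) = \sum_{m=0}^{N-1} \exp\!\left(2\pi\iu \tfrac{m}{N}(m_1 + m_2 h_2 + \cdots + m_d h_d)\right),
\]
which equals $N$ exactly when $m_1 + m_2 h_2 + \cdots + m_d h_d \equiv 0 \bmod N$ and vanishes otherwise. For $\bm \in \bbZ^d - \mathbf{0}$ satisfying this congruence, setting $\eps_i \coloneqq \operatorname{sign}(m_i)$ (choosing $\eps_i = 1$ whenever $m_i = 0$) yields $\eps_1 |m_1| + \eps_2 |m_2| h_2 + \cdots + \eps_d |m_d| h_d \equiv 0 \bmod N$, so by condition (iv) we get $\hat\beta(|m_1|, \dots, |m_d|) = 0$ and therefore $\hat b(\bm) = 0$ via \eqref{eq:coeff_b_beta}. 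Only $\bm = \mathbf{0}$ contributes, giving $\sum_{\bz \in \Lambda} b(\bz) = N \hat\beta(\mathbf{0})$, and combining the two steps produces $E_c(\Lambda) = N^2 \hat\beta(\mathbf{0}) - N \beta(\mathbf{1})$, matching the LP bound exactly.

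The one delicate point, and the main source of potential errors, is the bookkeeping between indices $\bm \in \bbZ^d$ appearing in the Fourier expansion of $b$ and indices $\bm' \in \bbN_0^d$ appearing in the Chebyshev expansion of $\beta$. This is precisely why condition (iv) is phrased existentially over all sign patterns $(\eps_1, \dots, \eps_d)$: a single Chebyshev mode $\bm' \in \bbN_0^d$ gives rise to $2^{\#\{i : m_i' \neq 0\}}$ Fourier modes in $\bbZ^d$, and equality in the LP chain requires that the congruence $m_1 + m_2 h_2 + \cdots + m_d h_d \equiv 0 \bmod N$ fails for every one of them simultaneously, which is exactly what the existential quantifier in (iv) enforces.
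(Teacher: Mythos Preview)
Your proposal is correct and follows essentially the same approach as the paper's proof: apply Corollary~\ref{cor:arccos_transformed} via (i) and (ii) to obtain the lower bound, then use (iii) to identify $c$ with $b$ on the nonzero differences of $\Lambda$ and (iv) to kill the nonzero Fourier modes in the character sum, yielding equality for $X=\Lambda$. The only cosmetic difference is that you exploit the group structure of $\Lambda$ to collapse the double sum to $N\sum_{\bz\in\Lambda-\mathbf{0}}c(\bz)$ at the outset, whereas the paper keeps the double sum and checks equality step by step in the chain~\eqref{ineq:lp}; your explicit bookkeeping of the sign pattern linking the $\bbZ^d$ Fourier index to the $\bbN_0^d$ Chebyshev index via~\eqref{eq:coeff_b_beta} is exactly the point the paper leaves slightly more implicit.
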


\begin{proof}
	Let $X \sbse [0, 1)^d, \#X = N$ be arbitrary. By Corollary \ref{cor:arccos_transformed} conditions (i) and (ii) give
	$$
	E_c(X) \geq N^2 \hat\beta(\mathbf{0}) - N \beta(\mathbf{1}).
	$$
	Condition (iii) and the lattice structure of $\Lambda$ imply that
	\begin{align*}
		E_c(\Lambda) = & \sum_{\substack{\bx, \boldy \in \Lambda \\ \bx \neq \boldy}} c(\bx - \boldy) \\
		= & \sum_{\substack{\bx, \boldy \in \Lambda \\ \bx \neq \boldy}} \gamma(\cos(2\pi(x_1-y_1)), \dots, \cos(2\pi(x_d-y_d))) \\
		= & \sum_{\substack{\bx, \boldy \in \Lambda \\ \bx \neq \boldy}} \beta(\cos(2\pi(x_1-y_1)), \dots, \cos(2\pi(x_d-y_d))) \\
		= & \sum_{\substack{\bx, \boldy \in \Lambda \\ \bx \neq \boldy}} b(\bx - \boldy),
	\end{align*}
	so that we have equality in the first inequality of \eqref{ineq:lp}. As for the second inequality therein, note that one would need to have
	$$
	\hat b(\bm) \left|\sum_{\bx \in \Lambda} \exp(2\pi\iu \bm^\top \bx)\right|^2 = 0
	$$
	for all $\bm \in \bbZ^d - \mathbf{0}$. Thus at least one of $\hat b(\bm)$ and
	$$
	\sum_{\bx \in \Lambda} \exp(2\pi\iu \bm^\top \bx) = \sum_{k=0}^{N-1} \exp\left(\frac{2\pi\iu k}N (m_1 + m_2 h_2 + \dots + m_d h_d)\right)
	$$
	must be zero. In particular, if the sum is nonzero, which is the case if $m_1 + m_2 h_2 + \dots + m_d h_d \equiv 0 \mod N$, then the corresponding coefficients $\hat b(\bm)$ must be. Since the coefficients of $b$ and $\beta$ are related by \eqref{eq:coeff_b_beta}, condition (iv) implies equality in the second inequality of \eqref{ineq:lp}.
\end{proof}

\begin{defi} \label{def:forbidden_degree}
	A vector $\bm \in \bbN_0^d - \mathbf{0}$ as in condition (iv) above will be called a \textbf{forbidden degree} (with respect to the lattice $\Lambda$). A function $\beta$ fulfilling the conditions (i-iv) above is called a \textbf{magic function} for the lattice $\Lambda$.
\end{defi}

\section{Universal optimality of the $3$-point lattices}

Let us now show the universal optimality of the $3$-point lattice $\cT^d$. 

\begin{proof} [Proof of Theorem \ref{thm:3_pt_lat}]
	The result follows directly from Theorem \ref{thm:magic_function} by constructing a suitable magic function $\beta$. Take a supporting hyperplane 
	$$
	\beta(\bs) = \beta_0 + \beta_1 (s_1 + \dots + s_d)
	$$
	of $\gamma$ in the point
	$$
	\left(\cos\left(\frac{2\pi}3\right), \dots, \cos\left(\frac{2\pi}3\right)\right) = \left(-\frac12, \dots, -\frac12\right).
	$$
	By convexity we have $\gamma(\bs) \geq \beta(\bs)$ with equality for $\bs = -\frac12 \mathbf{1}$ and by monotonicity we have $\beta_1 \geq 0$. Also note that the degrees
	$$
	\be^i = (0, \dots, 0, 1, 0, \dots, 0) \in \bbN_0^d
	$$
	for $i=1, \dots, d$ are not forbidden for $\cT^d$ by
	$$
	\sum_{k=0}^2 \exp\left(\pm \frac{2\pi\iu k}{3}\right) = 0.
	$$
	Thus $\beta$ is a magic function for $\cT^d$, showing its optimality.
\end{proof}

\section{The $5$-point Fibonacci lattice}

Let us make some preparations for the proof of Theorem \ref{thm:fib_lat_5}. We will give a construction of a suitable magic function $\beta = \beta_p$, which will be a bivariate polynomial of the form
$$
\beta_p(s_1, s_2) = \sum_{m_1, m_2 = 0}^4 \hat\beta_p(m_1, m_2) T_{m_1}(s_1) T_{m_2}(s_2).
$$
For the $5$-point Fibonacci lattice the forbidden degrees in this range are
\begin{align} \label{forbidden_degs}
	(1, 2), (1, 3), (2, 1), (2, 4), (3, 1), (3, 4), (4, 2), (4, 3).
\end{align}
Recall that for these degrees we need $\hat\beta_p(m_1, m_2) = 0$, while for general $m_1, m_2 = 0, 1, \dots, 4, (m_1, m_2) \neq (0, 0)$ we need $\hat\beta_p(m_1, m_2) \geq 0$. Also recall that we need
$$
\beta_p(s_1, s_2) \leq c_p^2(s_1, s_2)
$$
for all $-1 \leq s_1, s_2 \leq 1$ with equality in $(s_1, s_2) = (u, v), (v, u)$ where
\begin{align} \label{eq:uv}
	(u, v) \coloneqq \left(\cos\left(\frac{2\pi}5\right), \cos\left(\frac{4\pi}5\right)\right) = \left(\frac{\sqrt{5}-1}{4}, -\frac{\sqrt{5}+1}{4}\right).
\end{align}
The construction of $\beta_p$ will be done via a multi-step procedure, which we briefly summarize here:
\begin{itemize}
	\item [(1)] Construct a function $h_p(s) = \sum_{m=0}^4 \hat h_p(m) T_m(s)$ with $\hat h_p(m) \geq 0$ and
	$$
	0 \leq h_p(s) \leq f_p(s) \coloneqq 1 + \frac p2 \left(\frac16 - \frac{\arccos s}{2\pi} + \left(\frac{\arccos s}{2\pi}\right)^2\right)
	$$
	for $-1 \leq s \leq 1$, where the right inequality becomes an equality for $s = u, v$. Then
	$$
	h_p(s_1) h_p(s_2) \leq f_p(s_1) f_p(s_2) \eqcolon \gamma_p(s_1, s_2)
	$$
	with equality (in particular) for $(s_1, s_2)= (u, v), (v, u)$. Note though that the function $h_p(s_1) h_p(s_2)$ still contains forbidden degrees.
	
	\item [(2)] Construct a function $g_p(s_1, s_2) = \sum_{m_1, m_2 = 0}^4 \hat g_p(m_1, m_2) T_{m_1}(s_1) T_{m_2}(s_2)$ such that $0 \leq g_p(s_1, s_2)$ for $-1 \leq s_1, s_2 \leq 1$ with equality for $(s_1, s_2) = (u, v), (v, u)$ and
	$$
	\hat g_p(m_1, m_2) \leq \hat h_p(m_1) \hat h_p(m_2)
	$$
	with equality for the forbidden degrees $(m_1, m_2)$ as in \eqref{forbidden_degs}.
	
	\item [(3)] The function $\beta_p(s_1, s_2) \coloneqq h_p(s_1) h_p(s_2) - g_p(s_1, s_2)$ shows the optimality of $\Phi_5$ for the potential $c_p^2$.
\end{itemize}

\begin{proof} [Proof of Theorem \ref{thm:fib_lat_5}]
	Throughout the proof let $0 \leq p \leq 9$.
	
	\underline{Step 1, Hermite interpolation:} Let $h_p$ be the polynomial of degree $4$ given by the conditions
	\begin{align} \label{interpolation_conds}
		\begin{split}
			h_p(-1) & = f_p(-1) = 1-\frac p{24}, \\
			h_p(v) & = f_p(v) = 1-\frac{11p}{300}, \\
			h_p'(v) & = f_p'(v) = \frac{p}{10\pi\sqrt{\frac{5-\sqrt5}{2}}}, \\
			h_p(u) & = f_p(u) = 1+\frac p{300}, \\
			h_p'(u) & = f_p'(u) = \frac{3p}{10\pi\sqrt{\frac{5+\sqrt5}{2}}}.
		\end{split}
	\end{align}
	This is an instance of an \emph{Hermite interpolation problem}. Such a polynomial $h_p$ exists and is uniquely determined by these conditions.
	
	\underline{Step 1.1, $h_p(s) \leq f_p(s)$:} By the error formula for the Hermite interpolation \cite{Atk89, CK07}, for every $-1 \leq s \leq 1$ there is a $\xi = \xi(s) \in [-1, 1]$ such that
	\begin{align} \label{eq:hermite_error_fp}
		f_p(s) - h_p(s) = \frac{f_p^{(5)}(\xi)}{5!} (\xi+1) (\xi - v)^2 (\xi - u)^2.
	\end{align}
	Note that by the Taylor expansion \cite[Equation (13)]{Leh85} 
	$$
	f_p(s) = 1- \frac p{24} + \frac{p}{4\pi^2} \sum_{n=1}^\infty \frac{2^n}{n^2 {2n \choose n}} (s+1)^n
	$$
	for $-1 \leq s \leq 1$ we have $f_p^{(k)}(s) \geq 0$ for all $k \in \bbN_0$ in the range $0 \leq p \leq 9$ (even up to $0 \leq p \leq 24$). This property is called \emph{absolute monotonicity} of $f_p$. In particular, the right-hand side of \eqref{eq:hermite_error_fp} is nonnegative and we get 
	$h_p(s) \leq f_p(s)$ with equality for $s = -1, u, v$ by interpolation.
	
	
	\underline{Step 1.2, $\hat h_p(m) \geq 0$:} The interpolation conditions \eqref{interpolation_conds} enable one to calculate the coefficients $\hat h_p(m)$ explicitly by
	\begin{align} \label{eq:h_coeffs}
		\begin{split}
			\left[\begin{matrix}
				\hat h_p(0) \\ \hat h_p(1) \\ \hat h_p(2) \\ \hat h_p(3) \\ \hat h_p(4)
			\end{matrix}\right] & = \left[\begin{matrix}
				5 & -\frac{10+4\sqf}{5} & \frac{5+\sqf}{5} & -\frac{10-4\sqf}{5} & \frac{5-\sqf}{5} \\
				8 & -\frac{100+36\sqf}{25} & \frac{17+3\sqf}{10} & -\frac{100-36\sqf}{25} & \frac{17-3\sqf}{10} \\
				6 & -\frac{75+21\sqf}{25} & \frac{6+\sqf}5 & -\frac{75-21\sqf}{25} & \frac{6-\sqf}5 \\
				4 & -\frac{50+14\sqf}{25} & \frac{4+\sqf}5 & -\frac{50-14\sqf}{25} & \frac{4-\sqf}5 \\
				2 & -\frac{25+9\sqf}{25} & \frac{3+\sqf}{10} & -\frac{25-9\sqf}{25} & \frac{3-\sqf}{10}
			\end{matrix}\right] \left[\begin{matrix}
				f_p(-1) \\ f_p(v) \\ f_p'(v) \\ f_p(u) \\ f_p'(u)
			\end{matrix}\right] \\
			& = \left[\begin{matrix}
				1 \\
				0 \\
				0 \\
				0 \\
				0 \\
			\end{matrix}\right] + \left[\begin{matrix}
				-\frac{425-96\sqf}{3000} + \frac{(5+\sqf)\sqrt{5+\sqf} + 3 (5-\sqf)\sqrt{5 - \sqf}}{50 \pi \sqrt{10}} \\
				-\frac{1500-432\sqf}{7500} + \frac{(17+3\sqf)\sqrt{5+\sqf} + 3 (17-3\sqf)\sqrt{5 - \sqf}}{100 \pi \sqrt{10}} \\
				-\frac{1125-252\sqf}{7500} + \frac{(6+\sqf)\sqrt{5+\sqf} + 3 (6-\sqf)\sqrt{5 - \sqf}}{50 \pi \sqrt{10}} \\
				-\frac{750-168\sqf}{7500} + \frac{(4+\sqf)\sqrt{5+\sqf} + 3 (4-\sqf)\sqrt{5 - \sqf}}{50 \pi \sqrt{10}} \\
				-\frac{375-108\sqf}{7500} + \frac{(3+\sqf)\sqrt{5+\sqf} + 3 (3-\sqf)\sqrt{5 - \sqf}}{100 \pi \sqrt{10}} \\
			\end{matrix}\right] p
		\end{split}
	\end{align}
	(the columns of the matrix are the Chebyshev coefficients of the basis functions for this Hermite interpolation problem as given in \cite[Equations (3.6.2-4)]{Atk89}). These are degree $1$ polynomials in $p$ that stay nonnegative in the range $0 \leq p \leq 9$ (even up to $p \leq 315.02\dots$ after which only $\hat h_p(0)$ turns negative).
	
	\underline{Step 1.3, $0 \leq h_p(s)$:} The inequality $\hat h_p(0) \geq \hat h_p(1) + \dots + \hat h_p(4)$ is equivalent to
	\begin{align*}
		& 1-\left(\frac{425-96\sqf}{3000} - \frac{(5+\sqf)\sqrt{5+\sqf} + 3 (5-\sqf)\sqrt{5 - \sqf}}{50 \pi \sqrt{10}}\right)p \\
		\geq & \left(-\frac{3750-960\sqf}{7500} + \frac{(40+8\sqf)\sqrt{5+\sqf} + 3 (40-8\sqf)\sqrt{5 - \sqf}}{100 \pi \sqrt{10}}\right) p,
	\end{align*}
	which holds in the range $0 \leq p \leq 9$ (even up to $0 \leq p \leq 17.49\dots$) so that we can conclude (since $|T_m(s)| \leq 1$ for $-1 \leq s \leq 1$)
	$$
	h_p(s) = \sum_{m=0}^4 \hat h_p(m) T_m(s) \geq \hat h_p(0) - \sum_{k=1}^4 \hat h_p(m) \geq 0.
	$$
	
	\underline{Step 2, remove forbidden degrees:} By what we have shown so far we have
	\begin{align*} 
		\gamma_p(s_1, s_2) & \coloneqq f_p(s_1) f_p(s_2) \\
		& \geq h_p(s_1) h_p(s_2) = \sum_{m_1, m_2 = 0}^4 \hat h_p(m_1) \hat h_p(m_2) T_{m_1}(s_1) T_{m_2}(s_2).
	\end{align*}
	To obtain a suitable magic function $\beta_p$ we need to remove the forbidden degrees \eqref{forbidden_degs}. This will be done by subtracting off a suitable auxiliary function $g_p$ that kills the forbidden degrees while preserving the interpolation conditions. We choose
	\begin{align} \label{eq:kill_off}
		\begin{split}
			g_p(s_1, s_2) \coloneqq & (1+2s_1+2s_2)^2 \big(\lambda_0 + \lambda_1 (2+s_1+s_2) + \lambda_2 (2s_1^2+2s_2^2) \\
			& ~~~~~~~~~~~~~~~~~~~~~ + \lambda_3 (1-s_1 s_2) + \lambda_4 (1+2s_1^2)(1+2s_2^2)\big) \\
			& + (1+4s_1 s_2)^2 \big(\mu_0 + \mu_1 (4-2s_1^2-2s_2^2) \\
			& ~~~~~~~~~~~~~~~~~~~~ + \mu_2 (1-s_1-s_2+2s_1^2+2s_2^2+2s_1^2s_2+2s_1s_2^2)\\
			& ~~~~~~~~~~~~~~~~~~~~ + \mu_3 (2-2s_1^2-2s_2^2+4s_1^2s_2^2)\big),
		\end{split}
	\end{align}
	as an ansatz where $\lambda_0, \dots, \lambda_4, \mu_0, \dots, \mu_3 \in \bbR$ are coefficients to be chosen in a bit. Writing $g_p(s_1, s_2) = \sum_{m_1, m_2 = 0}^4 \hat g_p(m_1, m_2) T_{m_1}(s_1) T_{m_2}(s_2)$, the coefficients from \eqref{eq:kill_off} and $\hat g_p(m_1, m_2)$ are related by
	\begin{align} \label{eq:ghat_relation}
		\left[\begin{matrix}
			\hat g_p(0, 0) \\
			\hat g_p(0, 1) \\
			\hat g_p(0, 2) \\
			\hat g_p(0, 3) \\
			\hat g_p(0, 4) \\
			\hat g_p(1, 1) \\
			\hat g_p(1, 2) \\
			\hat g_p(1, 3) \\
			\hat g_p(1, 4) \\
			\hat g_p(2, 2) \\
			\hat g_p(2, 3) \\
			\hat g_p(2, 4) \\
			\hat g_p(3, 3) \\
			\hat g_p(3, 4) \\
			\hat g_p(4, 4)
		\end{matrix}\right] = \left[\begin{matrix}
			5 & 14 & 12 & 3 & 24 & 5 & 6 & 19 & 6 \\ 
			4 & 18 & 10 & 2 & 20 & 0 & 0 & 5 & 0 \\
			2 & 6 & 9 & 0 & 19 & 4 & 1 & 19 & 6 \\
			0 & 1 & 2 & 0 & 4 & 0 & 0 & 3 & 0 \\
			0 & 0 & 1 & 0 & 2 & 0 & -2 & 2 & 1\\ 
			8 & 24 & 24 & 1 & 50 & 8 & 8 & 32 & 10 \\
			0 & 6 & 4 & -2 & 10 & 0 & 0 & 9 & 0 \\
			0 & 0 & 4 & -1 & 10 & 0 & -4 & 4 & 2 \\ 
			0 & 0 & 0 & 0 & 0 & 0 & 0 & 3 & 0 \\
			0 & 0 & 4 & -2 & 13 & 4 & 0 & 20 & 9 \\
			0 & 0 & 0 & 0 & 2 & 0 & 0 & 4 & 0 \\
			0 & 0 & 0 & 0 & 1 & 0 & -2 & 2 & 2 \\ 
			0 & 0 & 0 & 0 & 2 & 0 & 0 & 0 & 2 \\
			0 & 0 & 0 & 0 & 0 & 0 & 0 & 1 & 0 \\
			0 & 0 & 0 & 0 & 0 & 0 & 0 & 0 & 1
		\end{matrix}\right] \left[\begin{matrix}
			\lambda_0 \\
			\lambda_1 \\
			\lambda_2 \\
			\lambda_3 \\
			\lambda_4 \\
			\mu_0 \\
			\mu_1 \\
			\mu_2 \\
			\mu_3
		\end{matrix}\right],
	\end{align}
	where by the symmetry relation $\hat g_p(m_1, m_2) = \hat g_p(m_2, m_1)$ all the coefficients are determined in this way. Let the coefficients in \eqref{eq:kill_off} now be given in such a way that $\hat g_p(m_1, m_2) = \hat h_p(m_1) \hat h_p(m_2)$ for the degrees
	\begin{align} \label{eq_degrees}
		(m_1, m_2) \in \{(1, 1), (1, 2), (1, 3), (2, 2), (2, 4), (3, 3), (3, 4)\}
	\end{align}
	as well as
	$$
	\hat g_p(0, 4) = \frac{p}9 \hat h_p(0) \hat h_p(4)
	$$
	and
	$$
	\hat g_p(4, 4) = \left(1-\frac{p}9\right) \hat h_p(4)^2.
	$$
	These 9 conditions uniquely determine the 9 coefficients $\lambda_0, \dots, \mu_3$. Indeed, by solving the subsystem in \eqref{eq:ghat_relation} corresponding to the rows of these degrees we obtain
	\begin{align} \label{eq:g_coeffs}
		\begin{split}
			& \left[\begin{matrix}
				\lambda_0 \\
				\lambda_1 \\
				\lambda_2 \\
				\lambda_3 \\
				\lambda_4 \\
				\mu_0 \\
				\mu_1 \\
				\mu_2 \\
				\mu_3
			\end{matrix}\right] \\
			= \frac1{24} & \left[\begin{matrix}
				-156 & 3 & -12 & 39 & -6 & 90 & -60 & 108 & 42 \\
				16 & 0 & 4 & -8 & 0 & 0 & 4 & -36 & -8 \\
				24 & 0 & 0 & 0 & 0 & -24 & -12 & 0 & 48 \\ 
				96 & 0 & 0 & -24 & 0 & -48 & 48 & 0 & -48 \\ 
				0 & 0 & 0 & 0 & 0 & 0 & 12 & 0 & -24 \\
				24 & 0 & 0 & -12 & 6 & 0 & -3 & -120 & -48 \\ 
				0 & 0 & 0 & 0 & 0 & -12 & 6 & 24 & 12 \\
				0 & 0 & 0 & 0 & 0 & 0 & 0 & 24 & 0 \\
				0 & 0 & 0 & 0 & 0 & 0 & 0 & 0 & 24 
			\end{matrix}\right] \left[\begin{matrix}
				\frac p9 \hat h_p(0) \hat h_p(4) \\
				\hat h_p(1)^2 \\
				\hat h_p(1) \hat h_p(2) \\
				\hat h_p(1) \hat h_p(3) \\
				\hat h_p(2)^2 \\
				\hat h_p(2) \hat h_p(4) \\
				\hat h_p(3)^2 \\
				\hat h_p(3) \hat h_p(4) \\
				\left(1-\frac{p}9\right) \hat h_p(4)^2
			\end{matrix}\right].
		\end{split}
	\end{align}
	
	\underline{Step 2.1, $g_p(s_1, s_2) \geq 0$ for $-1 \leq s_1, s_2 \leq 1$:} It suffices to show that the coefficients $\lambda_0, \dots, \mu_3$ are nonnegative. Indeed, the functions after the coefficients $\lambda_1, \lambda_2, \lambda_3, \lambda_4, \mu_1$ are clearly nonnegative on $[-1, 1]^2$. For $\mu_2$ and $\mu_3$ observe that
	\begin{align*}
		& 1-s_1-s_2+2s_1^2+2s_2^2+2s_1^2s_2+2s_1s_2^2 \\
		= & \frac5{18}+\frac1{72}(2-s_1-s_2)+\frac23 (1-s_1)(1-s_2)\\
		& +2(1+s_1)\left(s_2-\frac1{12}\right)^2 + 2(1+s_2) \left(s_1-\frac1{12}\right)^2 \geq 0
	\end{align*}
	and
	\begin{align*}
		2 - 2s_1^2 - 2s_2^2 + 4 s_1^2 s_2^2 = 2(1-s_1^2)(1-s_2^2)+2s_1^2s_2^2 \geq 0
	\end{align*}
	for $-1 \leq s_1, s_2 \leq 1$. All coefficients $\lambda_0, \dots, \mu_3$ are polynomials of the form $(c_0 + c_1 p) p^2$ in $p$ (with coefficients $c_0, c_1 \in \bbR$ depending on which coefficient is examined). These coefficients can be determined in closed form via \eqref{eq:h_coeffs} and \eqref{eq:g_coeffs}, here we give them in decimal form up to $5$ significant digits:
	\begin{align*}
		\lambda_0 & = p^2 (0.000029101\ldots + 0.00000047800\dots p), \\
		\lambda_1 & = p^2 (0.000048272\ldots - 0.000000048255\dots p), \\
		\lambda_2 & = p^2 (0.000020851\ldots - 0.000000084905\dots p), \\
		\lambda_3 & = p^2 (0.000025894\ldots - 0.00000028953\dots p), \\
		\lambda_4 & = p^2 (0.0000011872\ldots + 0.0000000050091\dots p), \\
		\mu_0 & = p^2 (0.00000062987\ldots - 0.000000064869\dots p), \\
		\mu_1 & = p^2 (0.00000017306\ldots - 0.0000000025045\dots p), \\
		\mu_2 & = 0.00000033334\dots p^2, \\
		\mu_3 & = 0.0000000050091\dots p^2 (9-p).
	\end{align*}
	Even up to this precision it is clear that these expressions are nonnegative for $0 \leq p \leq 9$.
	
	\underline{Step 2.2, $g_p(u, v) = g_p(v, u) = 0$:} This follows directly from the relations $1+2u+2v = 1+4uv = 0$ as can be seen from \eqref{eq:uv}.
	
	\underline{Step 2.3, $\hat g_p(m_1, m_2) \leq \hat h_p(m_1) \hat h_p(m_2)$:} By construction it holds $\hat g_p(m_1, m_2)$ $= \hat h_p(m_1) \hat h_p(m_2)$ for the degrees as in \eqref{eq_degrees}, including the forbidden degrees \eqref{forbidden_degs}. Again by construction it also holds $\hat g_p(m_1, m_2) \leq \hat h_p(m_1) \hat h_p(m_2)$ for $(m_1, m_2) \in \{(0, 4), (4, 4)\}$. For the remaining degrees $(0, 1), (0, 2), (0, 3), (1, 4),$ $(2, 3)$ (there is no restriction on the degree $(0, 0)$) we have, combining \eqref{eq:ghat_relation} and \eqref{eq:g_coeffs},
	\begin{align*}
		& \left[\begin{matrix}
			\hat g_p(0, 1) \\
			\hat g_p(0, 2) \\
			\hat g_p(0, 3) \\
			\hat g_p(1, 4) \\
			\hat g_p(2, 3)
		\end{matrix}\right] \\
		= \frac1{24} & \left[\begin{matrix}
			96 & 12 & 24 & -36 & -24 & 24 & 48 & -96 & -72 \\
			96 & 6 & 0 & -18 & 12 & -48 & 18 & 0 & -24 \\
			64 & 0 & 4 & -8 & 0 & -48 & 28 & 36 & -8 \\
			0 & 0 & 0 & 0 & 0 & 0 & 0 & 72 & 0 \\
			0 & 0 & 0 & 0 & 0 & 0 & 24 & 96 & -48 
		\end{matrix}\right] \left[\begin{matrix}
			\frac p9 \hat h_p(0) \hat h_p(4) \\
			\hat h_p(1)^2 \\
			\hat h_p(1) \hat h_p(2) \\
			\hat h_p(1) \hat h_p(3) \\
			\hat h_p(2)^2 \\
			\hat h_p(2) \hat h_p(4) \\
			\hat h_p(3)^2 \\
			\hat h_p(3) \hat h_p(4) \\
			\left(1-\frac{p}9\right) \hat h_p(4)^2
		\end{matrix}\right].
	\end{align*}
	The expressions $\hat h_p(m_1) \hat h_p(m_2) - \hat g_p(m_1, m_2)$ for these degrees are polynomials of the form $(c_0+c_1p+c_2p^2)p$ in $p$ (with coefficients $c_0, c_1, c_2 \in \bbR$ depending on the degree $(m_1, m_2)$). These expressions can be determined similar to step 2.1, we again give them up to $5$ significant digits:
	\begin{align*}
		\hat h_p(0) \hat h_p(1) - \hat g_p(0, 1) & = p (0.044660\ldots - 0.0014128\dots p \\
		& ~~~~~~ + 0.00000028452\dots p^2), \\
		\hat h_p(0) \hat h_p(2) - \hat g_p(0, 2) & = p (0.0075261\ldots - 0.00059124\dots p \\
		& ~~~~~~ + 0.00000029454\dots p^2), \\
		\hat h_p(0) \hat h_p(3) - \hat g_p(0, 3) & = p (0.0015699\ldots - 0.00010070\dots p \\
		& ~~~~~~ + 0.00000019803\dots p^2), \\
		\hat h_p(1) \hat h_p(4) - \hat g_p(1, 4) & = 0.0000084826\dots p^2, \\
		\hat h_p(1) \hat h_p(4) - \hat g_p(1, 4) & = p^2 (0.0000081077\ldots - 0.000000010018\dots p).
	\end{align*}
	It is again straight forward to check that these expressions are nonnegative for $0 \leq p \leq 9$.
	
	\underline{Step 3, constructing $\beta_p$:} Let now
	$$
	\beta_p(s_1, s_2) \coloneqq h_p(s_1) h_p(s_2) - g_p(s_1, s_2).
	$$
	We show that this is a magic function as in Theorem \ref{thm:magic_function} for the potential $c_p^2$ and the $5$-point Fibonacci lattice $\Phi_5$. Indeed, by steps 1.1, 1.3 and 2.1 we have $\gamma_p \geq \beta_p$ with equality in $(s_1, s_2) = (u, v), (v, u)$ by steps 1.1 and 2.2. The Chebyshev coefficients of $\beta_p$ are nonnegative (we have no constraint on $\hat\beta_p(0, 0)$) by step 2.3 and even zero for the forbidden degrees \eqref{forbidden_degs}. Thus $\beta_p$ is a magic function as in Theorem \ref{thm:magic_function} for $\gamma_p$.
\end{proof}

\begin{rem}
	The function $\gamma_p(s_1, s_2)$ is convex for $0 \leq p \leq \frac{24\pi^2}{18+\pi^2} = 8.4992\dots$ (by logarithmic convexity of $f_p$ \cite{SS12}), so that the range given for $p$ in Theorem \ref{thm:fib_lat_5} also captures the convex case similar to Theorem \ref{thm:3_pt_lat}.
\end{rem}


\section{Summary and outlook}

Motivated by discrepancy theory and related topics we introduced tensor product energies on the torus. We used linear programming methods to show the optimality of certain point sets. To this end we showed that the rational $3$-point lattice in any dimension fulfills a universal optimality statement with respect to monotone and convex potentials (Theorem \ref{thm:3_pt_lat}). In the special case of the two-dimensional torus we showed the optimality of the $5$-point Fibonacci lattice for a continuously parametrized set of potentials related to the worst case error of the quasi-Monte Carlo method over certain Sobolev spaces of smoothness $1$ (Theorem \ref{thm:fib_lat_5}).

There are several open question that remain concerning this topic. The main one is whether general Fibonacci lattices also fulfill a universal optimality condition and, if so, if this can also be shown for special point sets in the $d$-dimensional torus. Another question would be to generalize these results to other spaces. Notably, interpreting the $d$-dimensional torus as the $d$-fold product of one-dimensional spheres $S^1$, a natural next step would be to consider general product spaces of the form $S^{n_1} \times \dots \times S^{n_d}$. Further extensions to other non-two-point homogeneous spaces such as Grassmannians \cite{CHS96, EL25} or discrete spaces \cite{Bar21, BS21} are also imaginable.

\section*{Acknowledgement}

The author is supported by the ESF Plus Young Researchers Group ReSIDA-
H2 (SAB, project number 100649391). The author would also like to thank Dmitriy Bilyk from the University of Minnesota for many interesting and helpful conversations ultimately leading to this manuscript as well as Melia Haase from the University of Technology Chemnitz for assistance concerning numerical calculations.

\end{document}